\author[P.~Leonetti]{Paolo Leonetti}
\address{Universit\`a ``Luigi Bocconi''\\Department of Statistics\\Milan, Italy}
\email{leonetti.paolo@gmail.com}
\urladdr{\url{http://orcid.org/0000-0001-7819-5301}}
\author[C.~Sanna]{Carlo Sanna}
\address{Universit\`a degli Studi di Torino\\Department of Mathematics\\Turin, Italy}
\email{carlo.sanna.dev@gmail.com}
\urladdr{\url{http://orcid.org/0000-0002-2111-7596}}
\keywords{Fibonacci numbers, rank of appearance, greatest common divisor, natural density.}
\subjclass[2010]{Primary: 11B39. Secondary: 11A05, 11N25.}
\title{On the greatest common divisor of $n$ \\ and the $n$th Fibonacci number}
\newtheorem{thm}{Theorem}[section]
\newtheorem{lem}[thm]{Lemma}
\newtheorem{cor}[thm]{Corollary}
\theoremstyle{definition}
\def\lcm{\operatorname{lcm}}
\begin{document}

\maketitle
\thispagestyle{empty}

\begin{abstract}
Let $\mathcal{A}$ be the set of all integers of the form $\gcd(n, F_n)$, where $n$ is a positive integer and $F_n$ denotes the $n$th Fibonacci number.
We prove that $\#\left(\mathcal{A} \cap [1, x]\right) \gg x / \log x$ for all $x \geq 2$, and that $\mathcal{A}$ has zero asymptotic density. 
Our proofs rely on a recent result of Cubre and Rouse [Proc. Amer. Math. Soc. \textbf{142} (2014), 3771--3785] which gives, for each positive integer $n$, an explicit formula for the density of primes $p$ such that $n$ divides the rank of appearance of $p$, that is, the smallest positive integer $k$ such that $p$ divides $F_k$.
\end{abstract}

\section{Introduction}

Let $(F_n)_{n \geq 1}$ be the sequence of Fibonacci numbers, defined as usual by $F_1 = F_2 = 1$ and $F_{n+2} = F_{n+1} + F_n$, for all positive integers $n$.
Moreover, let $g$ be the arithmetic function defined by $g(n) := \gcd(n, F_n)$, for each positive integer $n$.
The first values of $g$ are listed in OEIS A104714~\cite{A104714}. 

The set $\mathcal{B}$ of fixed points of $g$, i.e., the set of positive integers $n$ such that $n$ divides $F_n$, has been studied by several authors.
For instance, Andr\'{e}-Jeannin~\cite{MR1131414} and Somer~\cite{MR1271392} investigated the arithmetic properties of the elements of $\mathcal{B}$. 
Furthermore, Luca and Tron~\cite{MR3409327} proved that
\begin{equation}\label{equ:Bbound}
\#\mathcal{B}(x) \leq x^{1 - \left(\tfrac1{2} + o(1)\right)\log \log \log x / \log \log x},
\end{equation}
when $x \to +\infty$, and Sanna~\cite{MR3606950} generalized their result to Lucas sequences. 
More generally, the study of the distribution of positive integers $n$ dividing the $n$th term of a linear recurrence has been studied by Alba Gonz\'alez, Luca, Pomerance, and Shparlinski~\cite{MR2928495},
while, Corvaja and Zannier~\cite{MR1918678}, and Sanna~\cite{San_preprint} considered the distribution of positive integers $n$ such that the $n$th term of a linear recurrence divides the $n$th term of another linear recurrence.
Also, it follows from a result of Sanna~\cite{San2_preprint} that the set $g^{-1}(1)$, i.e., the set of positive integers $n$ such that $n$ and $F_n$ are relatively prime, has a positive asymptotic density.

Define $\mathcal{A}:= \{g(n) : n \geq 1\}$.
Note that, in particular, $\mathcal{B} \subseteq \mathcal{A}$. 
The aim of this article is to study the structural properties and the distribution of the elements of $\mathcal{A}$. 
Note that it is not immediately clear whether or not a given positive integer belongs to $\mathcal{A}$. 
To this aim, we provide in \S\ref{sec:prelim} an effective criterion which allows us to enumerate the elements of $\mathcal{A}$, in increasing order, as:
\begin{equation*}
1,\;\; 2,\;\; 5,\;\; 7,\;\; 10,\;\; 12,\;\; 13,\;\; 17,\;\; 24,\;\; 25,\;\; 26,\;\; 29,\;\; 34,\;\; 35,\;\; 36,\;\; 37,\;\; \ldots
\end{equation*}

Our first result is a lower bound for the counting function of $\mathcal{A}$.

\begin{thm}\label{thm:lbound}
$\#\mathcal{A}(x) \gg x / \log x$, for all $x \geq 2$.
\end{thm}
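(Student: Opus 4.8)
The plan is to produce a large explicit subfamily of $\mathcal{A}$ that is easy to count, namely the set of primes that occur as values of $g$. The key observation is that if $p$ is a prime and $n$ is chosen so that $p \mid n$ and $p \mid F_n$, then $p \mid \gcd(n, F_n) = g(n)$; to get $g(n)$ equal to $p$ exactly one also needs $\gcd(n, F_n)$ to have no other prime factors, so a cleaner route is to take $n = p z(p)$, where $z(p)$ is the rank of appearance of $p$. Then $p \mid n$ trivially, and $p \mid F_{z(p)} \mid F_n$, so $p \mid g(n)$. The elementary divisibility properties of the rank of appearance — in particular $z(p) \mid n \iff p \mid F_n$, together with the fact that $p^2 \nmid F_p$ for $p$ outside a thin set and the bound $z(p) \le p+1$ — should let me pin down $g(p z(p))$ well enough, or at least show that some multiple of this shape lands a prime into $\mathcal{A}$.

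First I would recall the standard facts about $z(p)$: $p \mid F_k \iff z(p) \mid k$, and $z(p) \mid p - \left(\tfrac{p}{5}\right)$ for $p \ne 5$ (so $z(p) \le p+1$). Second, I would single out the primes $p$ for which $p \mid F_{p-1}$ or $p \mid F_{p+1}$ according to the quadratic residue of $5$ mod $p$, i.e. the "generic" case where $z(p) \mid p \mp 1$; for such $p$, setting $n = p \cdot (p \mp 1)$ or more simply using $n = p \ell$ for suitable $\ell \le p+1$ with $z(p) \mid \ell$, one gets $p \mid n$ and $p \mid F_n$, hence $p \mid g(n)$. Third — and this is the genuinely delicate point — I must ensure that $g(n)$ is exactly $p$, not a larger multiple of $p$; here I would use the effective criterion from \S\ref{sec:prelim} for membership in $\mathcal{A}$, or argue directly that for a positive-density (in fact, all but $O(x/\log^2 x)$) set of primes the auxiliary multiplier $\ell$ can be taken coprime to $p F_\ell$ in the relevant sense, so that no extra prime divides $\gcd(n, F_n)$. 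Either way, the upshot should be that a positive proportion of all primes up to $x$ lie in $\mathcal{A}$, which by the prime number theorem gives $\#\mathcal{A}(x) \gg x/\log x$.

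The main obstacle is precisely step three: controlling $\gcd(n, F_n)$ from above when $n = p\ell$. The issue is that $F_\ell$ can share a factor with $\ell$, or $F_n$ can pick up small prime factors $q$ with $z(q) \mid n$. I expect the cleanest fix is to choose $\ell$ from among the integers in a short interval that are coprime to a fixed small modulus and have $z(p) \mid \ell$ but no spurious small prime dividing $F_\ell$; a sieve or a direct appeal to the enumeration criterion of \S\ref{sec:prelim} should handle this. A fallback that avoids the exact-value question entirely: show instead that the set $\{\,g(n) : n \ge 1\,\}$ contains, for each prime $p$ in a positive-density set, \emph{some} element divisible by $p$ and $\le C p$ for an absolute constant $C$ — this weaker statement still yields $\#\mathcal{A}(Cx) \gg x/\log x$ and hence the theorem, and it sidesteps the need to kill every extraneous prime factor.

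Finally, for the lower bound to hold for \emph{all} $x \ge 2$ rather than asymptotically, I would note that $1, 2 \in \mathcal{A}$ (indeed $g(1) = 1$, $g(5) = 5$, etc.), so the inequality is trivial for bounded $x$ and the implied constant can absorb the small cases.
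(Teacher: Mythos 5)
Your strategic skeleton matches the paper's: both aim to show that $\gg x/\log x$ primes $p\le x$ satisfy $g(\ell(p))=p$ exactly, where $\ell(p)=p\,z(p)$, and hence lie in $\mathcal{A}$. But the genuinely hard step --- the one you flag as ``delicate'' and then defer to ``a sieve or a direct appeal to the enumeration criterion'' --- is left without any workable mechanism, and your proposed fix cannot work as described. The extraneous primes $q$ dividing $g(p\ell)$ are primes with $q\mid \ell$ (hence potentially $q \mid z(p)$) and $z(q)\mid p\ell$; since any admissible multiplier $\ell$ must be a multiple of $z(p)$, every prime factor of $z(p)$ divides every choice of $\ell$, so no amount of varying $\ell$ inside a short interval or imposing congruence conditions on $\ell$ can remove the obstruction, which lives in the arithmetic of $z(p)$ itself. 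Equivalently (Lemma~\ref{lem:characterizationA}\ref{item6b}), what you must count is primes $p$ such that $\ell(q)\nmid z(p)$ for \emph{every} prime $q$, and this is a statement about how often $z(p)$ avoids divisibility by small primes and by the quantities $\ell(q)$ for large $q$. Your sketch contains no tool to estimate the density of such $p$: the claim that ``all but $O(x/\log^2 x)$'' primes work is asserted, not derived, and your fallback (some element of $\mathcal{A}$ divisible by $p$ and $\le Cp$) faces exactly the same issue, since $g(\ell(p))=pm$ with $m\mid z(p)$ and bounding $m$ by a constant again requires excluding the extraneous primes; without that, the injectivity-type argument only yields elements of size up to about $p^2$, hence only $\#\mathcal{A}(x)\gg x^{1/2}/\log x$.

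The missing inputs are precisely what the paper supplies. First, the Cubre--Rouse theorem (Theorem~\ref{thm:cubrerouse}), giving the density $\zeta(m)$ of primes with $m\mid z(p)$; combined with inclusion--exclusion and Mertens' theorem it shows that the primes $p$ with $q\nmid z(p)$ for all odd $q\le y$ have relative density $\gg 1/\log y$. Second, a tail estimate for the primes $p$ such that $\ell(q)\mid z(p)$ for some large $q>y$: this uses the Brun--Titchmarsh inequality (Theorem~\ref{thm:bruntitchmarsh}), exploiting $z(p)\mid p\pm1$, together with the nontrivial bound $\sum_{q>y}1/\varphi(\ell(q))\ll y^{-1/4}$ of Lemma~\ref{lem:phiellq}, whose proof in turn needs the observation that few primes have abnormally small rank of appearance (via $\prod_{p\in\mathcal{Q}_\gamma(x)}p\mid\prod_{n\le x^\gamma}F_n$). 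None of these quantitative ingredients, nor substitutes for them, appear in your proposal, so the central counting claim remains unproven; as written, the argument does not close.
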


It is worth noting that it follows at once from Theorem~\ref{thm:lbound} and \eqref{equ:Bbound} that $\mathcal{B}$ has zero asymptotic density relative to $\mathcal{A}$ (we omit the details):

\begin{cor}
$\#\mathcal{B}(x) = o(\#\mathcal{A}(x))$, as $x \to +\infty$.
\end{cor}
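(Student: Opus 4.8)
The plan is to combine Theorem~\ref{thm:lbound} with the Luca--Tron bound~\eqref{equ:Bbound}. By Theorem~\ref{thm:lbound}, there is an absolute constant $c>0$ such that $\#\mathcal{A}(x) \geq c\,x/\log x$ for all $x \geq 2$. Hence, using~\eqref{equ:Bbound},
\[
\frac{\#\mathcal{B}(x)}{\#\mathcal{A}(x)} \;\leq\; \frac{1}{c}\,(\log x)\, x^{-(1/2+o(1))\log\log\log x/\log\log x},
\]
as $x \to +\infty$, so it suffices to show that the right-hand side tends to $0$.

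Denoting by $R(x)$ the right-hand side above, I would take logarithms:
\[
\log R(x) \;=\; -\log c + \log\log x - \Bigl(\tfrac12 + o(1)\Bigr)\,\frac{\log\log\log x}{\log\log x}\,\log x.
\]
Since $\tfrac{\log\log\log x}{\log\log x}\,\log x$ grows faster than any fixed power of $\log\log x$, in particular faster than $\log\log x$ itself, the last term dominates; therefore $\log R(x) \to -\infty$ and $R(x) \to 0$, which yields the claim.

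The argument is entirely routine; the only mildly delicate point is the bookkeeping with the $o(1)$ appearing in the exponent of~\eqref{equ:Bbound}, which one disposes of by noting that for all sufficiently large $x$ that exponent is at most, say, $1 - \tfrac13\log\log\log x/\log\log x$, reducing the limit to the elementary estimate above. This is why the authors are content to omit the details.
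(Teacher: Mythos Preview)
Your argument is correct and is exactly the approach the paper intends: combine the lower bound of Theorem~\ref{thm:lbound} with the Luca--Tron upper bound~\eqref{equ:Bbound} and observe that the resulting ratio tends to~$0$. The paper omits the details for precisely the reason you note---once the two inputs are in hand, the remaining estimate is routine.
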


Our second result is that $\mathcal{A}$ has zero asymptotic density:

\begin{thm}\label{thm:ubound}
$\#\mathcal{A}(x) = o(x)$, as $x \to +\infty$.
\end{thm}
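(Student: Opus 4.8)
The plan is to combine the membership criterion for $\mathcal{A}$ (from Section~\ref{sec:prelim}) with the Cubre--Rouse density formula. Write $z(k)$ for the rank of appearance of $k$ and put $\ell(m) := \lcm(m, z(m))$. Recall that $m \in \mathcal{A}$ if and only if $g(\ell(m)) = m$: if $m = g(n)$ then $m \mid n$ and $m \mid F_n$, so $z(m) \mid n$ and hence $\ell(m) \mid n$, whence $g(\ell(m)) \mid g(n) = m$ (using that $a \mid b$ implies $g(a) \mid g(b)$), while $m \mid g(\ell(m))$ always since $m \mid \ell(m)$ and $z(m) \mid \ell(m)$. The first step is the following elementary obstruction: \emph{if there is a prime $q$ with $q \nmid m$, with $q \mid z(p)$ for some prime $p \mid m$, and with $z(q) \mid \ell(m)$, then $m \notin \mathcal{A}$.} Indeed $q \mid z(p) \mid z(m) \mid \ell(m)$, while $z(q) \mid \ell(m)$ means $q \mid F_{\ell(m)}$; hence $q$ divides $\gcd(\ell(m), F_{\ell(m)}) = g(\ell(m))$, so $g(\ell(m)) \neq m$. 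Equivalently, every $m \in \mathcal{A}$ satisfies, for each prime $q$, at least one of: (i)~$q \mid m$; (ii)~$q \nmid z(p)$ for every prime $p \mid m$; (iii)~$z(q) \nmid \ell(m)$.

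The second step is to show that, for each fixed prime $q$, conditions (ii) and (iii) each single out a set of integers of asymptotic density zero; this is where the Cubre--Rouse theorem enters. For every integer $k \geq 1$ the set $\mathcal{P}_k := \{p \text{ prime} : k \mid z(p)\}$ has \emph{positive} density within the primes, so $\sum_{p \in \mathcal{P}_k} 1/p = +\infty$, and consequently the set of integers with no prime factor in $\mathcal{P}_k$ has asymptotic density zero, by the standard sieve bound. The set $E_q$ of $m$ satisfying (ii) is precisely the set of integers with no prime factor in $\mathcal{P}_q$, hence has density zero. Likewise, if $m$ satisfies (iii) then some prime power $r^f$ dividing $z(q)$ does not divide $\lcm(m, z(m))$; but a prime $p \mid m$ with $r^f \mid z(p)$ would give $r^f \mid z(p) \mid z(m) \mid \lcm(m, z(m))$, so $m$ has no prime factor in $\mathcal{P}_{r^f}$. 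Thus the set $E_q'$ of $m$ satisfying (iii) is contained in the finite union $\bigcup_{r^f \mid z(q)} \{m : m \text{ has no prime factor in } \mathcal{P}_{r^f}\}$, and again has density zero.

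To conclude, fix $T \geq 1$ and let $q_1 < q_2 < \cdots < q_T$ be the first $T$ primes. By the first step, every $m \in \mathcal{A}$ lies in $\bigcap_{i=1}^{T} \big( \{m : q_i \mid m\} \cup E_{q_i} \cup E_{q_i}' \big)$, which is contained in $\{m : q_1 q_2 \cdots q_T \mid m\} \cup \bigcup_{i=1}^{T} E_{q_i} \cup \bigcup_{i=1}^{T} E_{q_i}'$. Taking upper densities and invoking the second step, $\overline{d}(\mathcal{A}) \leq 1/(q_1 q_2 \cdots q_T)$. Since $q_1 q_2 \cdots q_T \to \infty$ as $T \to \infty$, this forces $\overline{d}(\mathcal{A}) = 0$, i.e.\ $\#\mathcal{A}(x) = o(x)$.

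I expect the crux to be the input from Cubre and Rouse, namely the positivity of the density of $\mathcal{P}_k$ for \emph{every} prime power $k$: this is exactly what makes the obstruction of the first step generic enough to expel all but a density-zero set of integers from $\mathcal{A}$. The remaining ingredients (the sieve bound for integers avoiding a set of primes with divergent reciprocal sum, the basic divisibility properties of $z$ and of $(F_n)$, and the bookkeeping with the primorial $q_1 q_2 \cdots q_T$) are routine.
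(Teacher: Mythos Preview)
Your proof is correct and follows essentially the same approach as the paper's: both hinge on the criterion $n \in \mathcal{A} \Leftrightarrow g(\ell(n)) = n$ together with the positivity of the Cubre--Rouse densities to force any $n \in \mathcal{A}$ having a suitable prime factor to be divisible by a prescribed prime $q$. The paper's version is a touch more economical---it bundles your conditions (ii) and (iii) into the single divisibility $\ell(q) \mid z(p)$ (applying Cubre--Rouse directly with $m=\ell(q)$) and works with one large prime $q$ rather than a primorial---but the underlying idea is identical.
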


It would be nice to have an effective upper bound for $\#\mathcal{A}(x)$ or, even better, to obtain its asymptotic order of growth.
We leave these as open questions for the interested readers.

\subsection*{Notation}

Throughout, we reserve the letters $p$ and $q$ for prime numbers. 
Moreover, given a set $\mathcal{S}$ of positive integers, we define $\mathcal{S}(x):=\mathcal{S}\cap [1,x]$ for all $x\ge 1$. 
We employ the Landau--Bachmann ``Big Oh'' and ``little oh'' notations $O$ and $o$, as well as the associated Vinogradov symbols $\ll$ and $\gg$.
In particular, all the implied constants are intended to be absolute, unless it is explicitly stated otherwise.

\section{Preliminaries}\label{sec:prelim}

This section is devoted to some preliminary results needed in the later proofs.
For each positive integer $n$, let $z(n)$ be \emph{rank of appearance of $n$} in the sequence of Fibonacci numbers, that is, $z(n)$ is the smallest positive integer $k$ such that $n$ divides $F_k$.
It is well known that $z(n)$ exists.
All the statements in the next lemma are well known, and we will use them implicitly without further mention.

\begin{lem}\label{lem:basic}
For all positive integer $m,n$ and all prime numbers $p$, we have:
\begin{enumerate}[label={\rm (\roman{*})}]
\item $F_m \mid F_n$ whenever $m \mid n$.
\item $m \mid F_n$ if and only if $z(m) \mid n$.
\item $z(m) \mid z(n)$ whenever $m \mid n$.
\item $z(p) \mid p-\left(\frac{p}{5}\right)$, where $\left(\frac{p}{5}\right)$ is a Legendre symbol.
\end{enumerate}
\end{lem}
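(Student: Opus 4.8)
The plan is to obtain (i)--(iii) from a single elementary Fibonacci identity together with the division algorithm, and to reserve the real work for (iv), which is the only statement carrying genuine arithmetic content; there I would reduce Binet's formula modulo $p$.

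I would first record the addition formula $F_{a+b}=F_{a+1}F_b+F_aF_{b-1}$, valid for all integers $a\ge 0$ and $b\ge 1$ with the convention $F_0:=0$, which is a one-line induction on $b$ from the recurrence. Writing $n=km$ and inducting on $k$, the identity $F_{(k+1)m}=F_{km+m}=F_{km+1}F_m+F_{km}F_{m-1}$ shows $F_m\mid F_{(k+1)m}$ as soon as $F_m\mid F_{km}$, which gives (i). For (ii), the implication ``$z(m)\mid n\Rightarrow m\mid F_n$'' is immediate from (i), since $m\mid F_{z(m)}\mid F_n$. Conversely, assume $m\mid F_n$; we may assume $z(m)\nmid n$, so $n=q\,z(m)+r$ with $1\le r<z(m)$. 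By (i) we have $m\mid F_{q\,z(m)}$, so the addition formula $F_n=F_{q\,z(m)+1}F_r+F_{q\,z(m)}F_{r-1}$ forces $m\mid F_{q\,z(m)+1}F_r$; since consecutive Fibonacci numbers are coprime (another trivial induction) and $m\mid F_{q\,z(m)}$, we get $\gcd(m,F_{q\,z(m)+1})=1$, hence $m\mid F_r$, contradicting the minimality of $z(m)$. Thus $z(m)\mid n$. Finally (iii) is a one-liner: if $m\mid n$ then $m\mid n\mid F_{z(n)}$, so $z(m)\mid z(n)$ by (ii).

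For (iv) I would use Binet's formula. Set $\phi=(1+\sqrt5)/2$ and $\psi=(1-\sqrt5)/2$, so $\phi+\psi=1$, $\phi\psi=-1$, and $\sqrt5\,F_n=\phi^n-\psi^n$ as an identity of algebraic integers in $\mathbb{Z}[\phi]$ (equivalently one may work throughout in the $\mathbb{F}_p$-algebra $\mathbb{F}_p[X]/(X^2-X-1)$ and never mention $\sqrt5$). The primes $p=2$ and $p=5$ are handled by direct computation: $z(2)=3=2-\left(\frac{2}{5}\right)$ and $z(5)=5=5-\left(\frac{5}{5}\right)$. Assume now $p\notin\{2,5\}$, so $\left(\frac{p}{5}\right)=\pm1$; by quadratic reciprocity $\left(\frac{5}{p}\right)=\left(\frac{p}{5}\right)$, hence $5$ is a square modulo $p$ exactly when $\left(\frac{p}{5}\right)=1$. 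Reducing modulo a prime of $\mathbb{Z}[\phi]$ over $p$, we work in $\mathbb{F}_p$ in that case and in $\mathbb{F}_{p^2}$ otherwise; in both cases $\phi$ and $\psi$ are well-defined, nonzero (their product is $-1$), and $2$ is invertible. If $\left(\frac{p}{5}\right)=1$, Fermat's little theorem gives $\phi^{p-1}=\psi^{p-1}=1$, so $\sqrt5\,F_{p-1}=0$, i.e.\ $p\mid F_{p-1}$, and (ii) yields $z(p)\mid p-1=p-\left(\frac{p}{5}\right)$. If $\left(\frac{p}{5}\right)=-1$, the Frobenius endomorphism of $\mathbb{F}_{p^2}$ sends $\sqrt5\mapsto(\sqrt5)^p=5^{(p-1)/2}\sqrt5=-\sqrt5$, and since $2^p\equiv2\pmod p$ we obtain $\phi^p=(1+\sqrt5)^p/2^p=(1-\sqrt5)/2=\psi$ and similarly $\psi^p=\phi$; hence $\phi^{p+1}=\phi\psi=-1=\psi^{p+1}$, so $\sqrt5\,F_{p+1}=0$, i.e.\ $p\mid F_{p+1}$, and (ii) gives $z(p)\mid p+1=p-\left(\frac{p}{5}\right)$.

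The only place demanding care is the field arithmetic in (iv): one must apply the Frobenius identities $\phi^p=\psi$ and $\psi^p=\phi$ in the correct field $\mathbb{F}_{p^2}$, remember to reduce $2^p$ to $2$ modulo $p$, and justify that the algebraic-integer identity $\sqrt5\,F_n=\phi^n-\psi^n$ may legitimately be pushed into a residue field of $\mathbb{Z}[\phi]$ above $p$ — phrasing everything inside $\mathbb{F}_p[X]/(X^2-X-1)$, where $\xi^n=F_n\xi+F_{n-1}$, removes this last worry at the cost of slightly heavier notation. Apart from this case analysis and the small-prime bookkeeping, no real obstacle is expected: items (i)--(iii) are short inductions and a use of the division algorithm.
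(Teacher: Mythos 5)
Your proof is correct. Note that the paper itself offers no proof of this lemma: it simply declares all four statements to be well known (they are classical facts about Fibonacci divisibility and the rank of appearance) and uses them implicitly. Your argument is the standard one — the addition formula $F_{a+b}=F_{a+1}F_b+F_aF_{b-1}$ plus the division algorithm for (i)--(iii), and the Binet/Frobenius computation in $\mathbb{F}_p$ or $\mathbb{F}_{p^2}$ (with the small cases $p=2,5$ done by hand) for (iv) — and all the delicate points you flag (coprimality of consecutive Fibonacci numbers, reduction of the identity $\sqrt5\,F_n=\phi^n-\psi^n$ into a residue field above $p$, the evaluation $2^p\equiv 2$) are handled correctly, so the write-up would serve as a complete substitute for the omitted proof.
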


For each positive integer $n$, define $\ell(n) := \lcm(n, z(n))$. 
The next lemma shows some elementary properties of the functions $g$, $\ell$, $z$, and their relationship with $\mathcal{A}$.

\begin{lem}\label{lem:characterizationA}
For all positive integer $m,n$ and all prime numbers $p$, we have:
\begin{enumerate}[label={\rm (\roman{*})}]
\item \label{item1b} $g(m) \mid g(n)$ whenever $m \mid n$.
\item \label{item2b} $n \mid g(m)$ if and only if $\ell(n) \mid m$.
\item \label{item3b} $n \in \mathcal{A}$ if and only if $n = g(\ell(n))$.
\item \label{item4b} $p \mid n$ whenever $\ell(p) \mid \ell(n)$ and $n \in \mathcal{A}$.
\item \label{item5b} $\ell(p) = p z(p)$ whenever $p \neq 5$, and $\ell(5) = 5$.
\item \label{item6b} $p \in \mathcal{A}$ if $p \neq 3$ and $\ell(q) \nmid z(p)$ for all prime numbers $q$.
\end{enumerate}
\end{lem}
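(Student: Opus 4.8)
We want to show that $p \in \mathcal{A}$ under the hypotheses $p \neq 3$ and $\ell(q) \nmid z(p)$ for all primes $q$. By Lemma~\ref{lem:characterizationA}\ref{item3b}, it suffices to verify $p = g(\ell(p))$, i.e. that $\gcd(\ell(p), F_{\ell(p)}) = p$. Since $p \neq 3$ implies $p \neq 5$ is not forced, but if $p = 5$ then $\ell(5) = 5$ and $g(5) = \gcd(5, F_5) = \gcd(5,5) = 5$, so that case is immediate; assume henceforth $p \neq 3, 5$, so by \ref{item5b} we have $\ell(p) = p\,z(p)$.

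First I would show $p \mid g(\ell(p))$. We have $p \mid \ell(p)$ trivially, and $p \mid F_{z(p)} \mid F_{\ell(p)}$ by Lemma~\ref{lem:basic}(i) since $z(p) \mid \ell(p)$. Hence $p \mid \gcd(\ell(p), F_{\ell(p)}) = g(\ell(p))$, and therefore $g(\ell(p)) \in \mathcal{A}$ and $g(\ell(p)) \geq p$; in fact $p \mid g(\ell(p))$.

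The heart of the matter is the reverse: no prime $q$ divides $g(\ell(p))$ other than $p$, and moreover $p^2 \nmid g(\ell(p))$. Suppose some prime $q$ divides $g(\ell(p)) = \gcd(p\,z(p), F_{p z(p)})$. Then $q \mid p\,z(p)$, so either $q = p$ or $q \mid z(p)$; and $q \mid F_{p z(p)}$ means $z(q) \mid p\,z(p)$, i.e. $\ell(q) = \lcm(q, z(q))$ divides $\ldots$ — here I need to be careful. If $q \mid z(p)$, then since also $q \mid F_{p z(p)}$ we get $z(q) \mid p z(p)$, and then $\ell(q) = \lcm(q, z(q))$ divides $\lcm(z(p), p z(p)) = p z(p)$. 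But I want to conclude $\ell(q) \mid z(p)$ to contradict the hypothesis, which requires removing the factor of $p$; this works unless $p \mid \ell(q)$, i.e. unless $p \mid z(q)$ (since $p \neq q$ as $q \mid z(p) < p$ forces $q < p$). The case $q = p$ and the case $p \mid z(q)$ both need separate handling, and I expect this bookkeeping — together with ruling out $p^2 \mid g(\ell(p))$, which amounts to showing $p^2 \nmid F_{p z(p)}$, for which one invokes that the $p$-adic valuation of $F_{p z(p)}$ equals $v_p(F_{z(p)}) + v_p(p) = 1 + 1$... wait, that would be $2$; so in fact one needs $p \nmid z(p)$, equivalently $p \nmid F_{z(p)}/\text{stuff}$, handled via the standard lifting-the-exponent behavior of $z$, namely $z(p^2) = p\,z(p)$ unless $p$ is a Wall–Sun–Sun prime — so the real subtlety is that $\ell(p) = p z(p)$ is exactly the threshold where $p^2$ does \emph{not} yet divide $F_{\ell(p)}$ — to be the main obstacle.

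To organize the argument cleanly I would isolate two facts: (a) $v_q(F_{p z(p)}) \le v_q(z(p))$ for every prime $q \neq p$ dividing $z(p)$, unless $\ell(q) \mid z(p)$, obtained by analyzing $z(q^{v+1})$ versus $z(q^v)$; and (b) $v_p(F_{p z(p)}) = 1$, using that $z(p) \mid p - \left(\tfrac{p}{5}\right)$ so $p \nmid z(p)$, hence $v_p(F_{p z(p)}) = v_p(F_{z(p)} \cdot p \cdot u)$ for a $p$-indivisible $u$, giving valuation exactly $1$. Combining (a), (b), and the case $q = p$ with $q \nmid z(p)$ (where $q = p$ but then $v_p(p z(p)) = 1$ already bounds it), we conclude every prime power dividing $g(\ell(p))$ divides $p$ to the first power, so $g(\ell(p)) = p$, and the hypothesis $\ell(q) \nmid z(p)$ is precisely what blocks the exceptional branches. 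Finally the exclusion $p \neq 3$ enters because $z(3) = 4$ and $3 \mid F_4 = 3$ but also... I would double-check the small case $p = 3$ directly to see where the generic argument fails (there $\ell(3) = 12$ and $g(12) = \gcd(12, F_{12}) = \gcd(12, 144) = 12 \neq 3$), confirming that $3$ is a genuine exception and that requiring $p \neq 3$ is necessary.
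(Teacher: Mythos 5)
Your reduction of part \ref{item6b} to showing $g(\ell(p))=p$, and the verification that $p$ divides $g(\ell(p))$ exactly once, are sound (though the detour through lifting-the-exponent and Wall--Sun--Sun primes is unnecessary: since $g(\ell(p))\mid\ell(p)=p\,z(p)$ and $z(p)\mid p\pm1$ forces $p\nmid z(p)$, the bound $p^2\nmid g(\ell(p))$ is immediate, which is all the paper uses). The genuine gap is the case you yourself flag as ``the main obstacle'' and then never close: a prime $q\neq p$ with $q\mid g(\ell(p))$, where $z(q)\mid p\,z(p)$ but $p\mid z(q)$, so that you cannot strip the factor $p$ and conclude $\ell(q)\mid z(p)$. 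Your proposed organizing fact (a) does not repair this: even where it applies, the bound $v_q(F_{p z(p)})\le v_q(z(p))$ only gives $v_q(g(\ell(p)))\le v_q(z(p))$, which can be positive, so it cannot yield $g(\ell(p))=p$; and its ``unless $\ell(q)\mid z(p)$'' clause omits precisely the $p\mid z(q)$ branch, so the hypothesis $\ell(q)\nmid z(p)$ does \emph{not} ``block all exceptional branches'' as claimed. (Also, your parenthetical ``$q\mid z(p)<p$ forces $q<p$'' is not quite right, since $z(p)$ can equal $p+1$, e.g.\ $z(7)=8$; what one actually uses is $p\nmid z(p)$ to get $q\neq p$.)

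The paper closes exactly this case with a short symmetric size argument that your sketch is missing: if $q\mid g(\ell(p))$ with $q\neq p$, then $\ell(q)\mid\ell(p)=p\,z(p)$ by \ref{item2b}, and $\ell(q)\nmid z(p)$ forces $p\mid z(q)\le q+1$, while $q\mid\ell(p)$ with $q\neq p$ forces $q\mid z(p)\le p+1$; hence $|p-q|\le1$, impossible once $p\ge7$ (the primes $2$ and $5$ being checked directly, and $p=3$ excluded). Until you supply this (or some substitute) for the $p\mid z(q)$ branch, and replace the valuation bookkeeping in (a) by an argument that actually shows $v_q(g(\ell(p)))=0$, the proof of \ref{item6b} is incomplete; note also that parts \ref{item1b}--\ref{item5b}, which you invoke freely, are asserted but not proved in your proposal.
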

\begin{proof}
Facts \ref{item1b} and \ref{item2b} follow easily from the definitions of $g$ and $\ell$ and the properties of $z$. 
To prove \ref{item3b}, note that $n$ divides both $\ell(n)$ and $F_{\ell(n)}$ hence $n \mid g(\ell(n))$ for all positive integers $n$. 
Conversely, if $n \in \mathcal{A}$, then $n=g(m)$ for some positive integer $m$. 
In~particular, $n\mid g(m)$ which is equivalent to $\ell(n)\mid m$ by \ref{item2b}. 
Therefore $g(\ell(n)) \mid g(m) = n$, thanks to \ref{item1b}, and in conclusion $g(\ell(n)) = n$.
Fact \ref{item4b} follows at once from \ref{item2b} and \ref{item3b}. 

A quick computation shows that $\ell(5) = 5$, while for all prime numbers $p\neq 5$ we have $\gcd(p,z(p))=1$, since $z(p)\mid p \pm 1$, so that $\ell(p)=pz(p)$, and this proves \ref{item5b}. 

Lastly, let us suppose that $p\neq 3$ is a prime number such that $\ell(q) \nmid z(p)$ for all prime numbers $q$.
In particular, $p\neq 5$ since $\ell(5)=z(5)=5$, by \ref{item5b}.
Also, the claim \ref{item6b} is easily seen to hold for $p = 2$.
Hence, let us suppose hereafter that $p \geq 7$. 
Since $z(p) \mid p\pm 1$, it easily follows that $p \mid\mid g(\ell(p))$. 
At this point, if $q \mid g(\ell(p))$ for some prime $q \neq p$, then $\ell(q) \mid \ell(p)=pz(p)$ thanks to \ref{item2b}.
But $\ell(q)\nmid z(p)$, hence $p\mid \ell(q)=\lcm(q,z(q))$ so that $p\mid z(q)\le q+1$. 
Similarly, $q\mid g(\ell(p))\mid \ell(p)$ implies $q\mid z(p) \le p+1$. 
Hence $|p - q| \leq 1$, which is impossible since $p \ge 7$.
Therefore $q\nmid g(\ell(p))$, with the consequence that $p=g(\ell(p))$, i.e., $p \in \mathcal{A}$ by \ref{item3b}. 
This concludes the proof of \ref{item6b}.
\end{proof}

It is worth noting that Lemma~\ref{lem:characterizationA}\ref{item3b} provides an effective criterion to establish whether a given positive integer belongs to $\mathcal{A}$ or not. This is how we evaluated the elements of $\mathcal{A}$ listed in the introduction.

It follows from a result of Lagarias~\cite{MR789184,MR1251907}, that the set of prime numbers $p$ such that $z(p)$ is even has a relative density of $2/3$ in the set of all prime numbers.
Bruckman and Anderson~\cite[Conjecture 3.1]{MR1627443} conjectured, for each positive integer $m$, a formula for the limit 
\begin{equation*}
\zeta(m) := \lim_{x \to +\infty} \frac{\#\{p \leq x : m \mid z(p)\}}{x / \log x} .
\end{equation*}
Their conjecture was proved by Cubre and Rouse~\cite[Theorem~2]{MR3251719}, who obtained the following result.

\begin{thm}\label{thm:cubrerouse}
For each prime number $q$ and each positive integer $e$, we have
\begin{equation*}
\zeta(q^e) = \frac{q^{2-e}}{q^2 - 1} ,
\end{equation*}
while for any positive integer $m$, we have
\begin{equation*}
\zeta(m) = \prod_{q^e \mid\mid m} \zeta(q^e) \cdot \begin{cases} 1 & \text{ if } 10 \nmid m, \\ \tfrac{5}{4}  & \text{ if } m \equiv 10 \bmod {20}, \\ \tfrac1{2} & \text{ if } 20 \mid m .\end{cases}
\end{equation*}
\end{thm}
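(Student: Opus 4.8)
The plan is to derive Theorem~\ref{thm:cubrerouse} from the Chebotarev density theorem, after recasting the divisibility $m \mid z(p)$ as a Frobenius condition. Let $\alpha := (1+\sqrt5)/2$ and $\beta := (1-\sqrt5)/2$ be the roots of $X^2 - X - 1$, put $F := \mathbb{Q}(\sqrt5)$, and set $\lambda := \alpha/\beta$, a unit of norm $1$ in $F$; note $\lambda = -\alpha^2$, so for an odd prime $q$ the number $\lambda$ generates the same tower of $q$-power radicals over $F$ as $\alpha$. From Binet's formula $F_k = (\alpha^k - \beta^k)/(\alpha - \beta)$ together with Lemma~\ref{lem:basic} one checks that, for every prime $p \notin \{2,5\}$ and every prime $\mathfrak p$ of $F$ above $p$, the rank $z(p)$ equals the multiplicative order of $\lambda \bmod \mathfrak p$ in $\mathbb{F}_{\mathfrak p}^{\ast}$, where $\mathbb{F}_{\mathfrak p} = \mathcal O_F/\mathfrak p$ equals $\mathbb{F}_p$ if $\left(\frac p5\right) = 1$ and $\mathbb{F}_{p^2}$ if $\left(\frac p5\right) = -1$ (the two primes $\mathfrak p$ above a split $p$ giving Galois-conjugate, hence equi-ordered, reductions of $\lambda$). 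Thus ``$m \mid z(p)$'' becomes a statement about a multiplicative order modulo $\mathfrak p$.

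The first step is to reduce to a prime power $m = q^e$: the events $\{q^{e} \mid z(p)\}$ for the various $q^{e} \mid\mid m$ are independent apart from a constraint tying together the primes $2$ and $5$ --- roughly, $5 \mid m$ forces $\left(\frac p5\right) = 1$ whereas $2 \mid m$ imposes no such restriction, and $\sqrt5$ lies in all the division fields involved --- and carrying this along produces $\zeta(m) = \prod_{q^e \,\mid\mid\, m} \zeta(q^e)$ times the stated correction according to $m \bmod 20$. The key observation for a single prime power is that ``$q^e \mid \operatorname{ord}_{\mathfrak p}(\lambda)$'' is \emph{not} simply ``$\lambda$ is a non-$q$th-power residue'': a short computation in the cyclic group $\mathbb{F}_{\mathfrak p}^{\ast}$ shows that, writing $f := v_q\bigl(p - \left(\tfrac p5\right)\bigr)$ (recall $z(p) \mid p - \left(\tfrac p5\right)$ by Lemma~\ref{lem:basic}), one has $q^e \mid \operatorname{ord}_{\mathfrak p}(\lambda)$ if and only if $f \ge e$ and $\lambda$ is not a $q^{\,f-e+1}$th power modulo $\mathfrak p$. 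I would therefore stratify by the value of $f$: for each fixed $f \ge e$, the set of primes $p$ with $v_q\bigl(p - \left(\tfrac p5\right)\bigr) = f$ and $q^e \mid z(p)$ is cut out, inside the field
\begin{equation*}
K_{q,f} := \mathbb{Q}\bigl(\sqrt5,\ \zeta_{q^{f+1}},\ \lambda^{1/q^{\,f-e+1}}\bigr)
\end{equation*}
(which is Galois over $\mathbb{Q}$, the conjugate radical $\lambda^{-1/q^{\,f-e+1}}$ already lying in it), by a union of conjugacy classes of $\operatorname{Gal}(K_{q,f}/\mathbb{Q})$: it records that $p \equiv \left(\tfrac p5\right) \bmod q^f$ but $p \not\equiv \left(\tfrac p5\right) \bmod q^{f+1}$, and that $\lambda$ is not a $q^{\,f-e+1}$th power modulo $\mathfrak p$. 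Summing the densities $\#(\text{good class})/[K_{q,f}:\mathbb{Q}]$ over $f \ge e$ gives a geometric series; when $q \notin \{2,5\}$ one finds $[K_{q,f}:\mathbb{Q}] = 2\,q^{f}(q-1)\,q^{\,f-e+1}$, the good class has cardinality $2(q-1)\bigl(q^{\,f-e+1}-1\bigr)$, and $\sum_{f\ge e}\bigl(q^{\,f-e+1}-1\bigr)q^{-(2f-e+1)} = q^{2-e}/(q^2-1)$, as claimed.

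The step I expect to be the main obstacle is the exact degree $[K_{q,f}:\mathbb{Q}]$ at the exceptional primes $q \in \{2,5\}$ --- equivalently, pinning down the entanglement of the cyclotomic and radical towers. One has $\sqrt5 \in \mathbb{Q}(\zeta_5) \subseteq \mathbb{Q}(\zeta_{5^{f+1}})$, so adjoining $\sqrt5$ is not independent of the $5$-power cyclotomic field (and, since $5 \mid p - \left(\tfrac p5\right)$ forces $p$ to split in $F$, the ``inert'' primes contribute nothing once $5 \mid m$); at $q = 2$ there are further coincidences among $i$, $\sqrt{\pm2}$, $\sqrt5$, $\sqrt\lambda = i\alpha$ inside the $2$-power cyclotomic fields (note $\lambda = (i\alpha)^2$ is already a square). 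Each coincidence forces $[K_{q,f}:\mathbb{Q}]$ to drop, and one has to check that the cardinality of the good class drops in the same proportion, so that the \emph{pure} prime-power densities stay $\zeta(2^e) = 2^{2-e}/3$ and $\zeta(5^e) = 5^{2-e}/24$ while the product formula is altered precisely when both $2$ and $5$ divide $m$. Bookkeeping these changes should produce the factors $1$, $\tfrac{5}{4}$, $\tfrac{1}{2}$ according as $10 \nmid m$, $m \equiv 10 \bmod 20$, or $20 \mid m$. To finish, one verifies the claimed disjointness over $F$ for distinct odd primes $q \ne 5$, carries out the delicate $q = 2$ and $q = 5$ bookkeeping, and (as a check) recovers Lagarias' value $\zeta(2) = 2/3$ for the density of primes of even rank of appearance quoted in the text.
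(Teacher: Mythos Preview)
The paper does not prove Theorem~\ref{thm:cubrerouse}; it simply quotes it as a known result of Cubre and Rouse~\cite[Theorem~2]{MR3251719}, so there is no proof in the paper to compare against. Your sketch is, in outline, the Cubre--Rouse approach: reinterpret $m \mid z(p)$ via the order of $\lambda = \alpha/\beta$ modulo a prime of $\mathbb{Q}(\sqrt5)$, build Kummer--cyclotomic extensions $K_{q,f}$, apply Chebotarev to count the relevant Frobenius classes, sum the resulting geometric series, and then track the entanglement at $q \in \{2,5\}$ to obtain both the prime-power values and the correction factor modulo~$20$. The sketch is reasonable as a plan, though what you have written is not yet a proof: the degree computations and the ``good class'' counts are asserted rather than verified, and the $q=2$ case (where $\lambda = -\alpha^2$ is already a square and the cyclotomic tower is not cyclic) requires genuine care that you have only flagged. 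If you intend to include a self-contained proof rather than a citation, you would need to fill in those bookkeeping steps; otherwise, simply cite Cubre--Rouse as the present paper does.
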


Note that the arithmetic function $\zeta$ is not multiplicative.
However, the restriction of $\zeta$ to the odd positive integers is multiplicative.
This fact will be useful later.

Let $\varphi$ be the Euler's totient function.
We need the following technical lemma.

\begin{lem}\label{lem:phiellq}
We have
\begin{equation*}
\sum_{q > y} \frac1{\varphi(\ell(q))} \ll \frac1{y^{1/4}} , 
\end{equation*}
for all $y > 0$.
\end{lem}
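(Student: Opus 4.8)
The goal is to bound $\sum_{q > y} 1/\varphi(\ell(q))$. Recall from Lemma~\ref{lem:characterizationA}\ref{item5b} that $\ell(q) = q z(q)$ for $q \neq 5$, and since $\gcd(q, z(q)) = 1$ in that case, we have $\varphi(\ell(q)) = \varphi(q)\varphi(z(q)) = (q-1)\varphi(z(q))$. So up to the single term $q = 5$ (which contributes $O(1)$ and is irrelevant for $y$ large, and can be absorbed for small $y$ since the sum is then $\ll 1 \ll y^{-1/4}$ when $y \le 1$, say), we must bound
\begin{equation*}
\sum_{q > y} \frac{1}{(q-1)\varphi(z(q))} .
\end{equation*}
The difficulty is that $z(q)$ can be as small as order $\log q$ (when $q \mid F_k$ for small $k$), so $\varphi(z(q))$ does not help uniformly; we need to exploit that such primes are rare. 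The natural strategy is to split according to the size of $z(q)$: either $z(q)$ is large, in which case $\varphi(z(q))$ provides enough savings directly, or $z(q)$ is small, in which case $q$ divides $F_{z(q)}$ with $z(q)$ small, and there are very few such primes.

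\textbf{Step 1: the case $z(q)$ large.} Since $z(q) \mid q \pm 1$, write $q \pm 1 = z(q) m$ for some positive integer $m$. Using the standard bound $\varphi(d) \gg d / \log\log d \gg d^{3/4}$ (even the cruder $\varphi(d) \gg d^{3/4}$ suffices), the terms with $z(q) > q^{1/2}$ contribute
\begin{equation*}
\sum_{\substack{q > y \\ z(q) > q^{1/2}}} \frac{1}{(q-1)\varphi(z(q))} \ll \sum_{q > y} \frac{1}{q \cdot q^{3/8}} \ll \sum_{n > y} \frac{1}{n^{11/8}} \ll \frac{1}{y^{3/8}} \ll \frac{1}{y^{1/4}} .
\end{equation*}

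\textbf{Step 2: the case $z(q)$ small.} If $z(q) \le q^{1/2}$, then $q \mid F_{z(q)}$ with $z(q) \le q^{1/2} =: k$, so $q$ divides $\prod_{j \le k} F_j \le F_k^k \le \phi^{k^2}$ where $\phi$ is the golden ratio; hence for a fixed value $k_0 = z(q)$ the prime $q$ divides $F_{k_0}$, and the number of primes $q$ in a dyadic range $(2^{i}, 2^{i+1}]$ with $z(q) = k_0$ is at most $\omega(F_{k_0}) \ll k_0$. More efficiently: for each $k_0$, the sum of $1/q$ over primes $q$ with $z(q) = k_0$ and $q > y$ is at most $\sum_{q \mid F_{k_0}, q > \max(y, k_0^2)} 1/q$. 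Organize the sum by the value $k_0 = z(q)$: if $z(q) = k_0$ and $q > y$ then $q > k_0^2$ (since $z(q) \le q^{1/2}$ means $q \ge z(q)^2 = k_0^2$), and also $q \mid F_{k_0}$, so $q \ge 2^{k_0/2}$ roughly is false in general, but $q$ is at least the largest prime in our range. I would bound, for each $k_0 \ge 1$,
\begin{equation*}
\sum_{\substack{q > y,\ z(q) = k_0}} \frac{1}{(q-1)\varphi(k_0)} \le \frac{1}{\varphi(k_0)} \sum_{\substack{q \mid F_{k_0} \\ q > \max(y,\, k_0^2)}} \frac{2}{q} \le \frac{2\,\omega(F_{k_0})}{\varphi(k_0)\max(y, k_0^2)} \ll \frac{\log F_{k_0}}{\varphi(k_0)\max(y,k_0^2)} \ll \frac{k_0}{\varphi(k_0)\max(y,k_0^2)} .
\end{equation*}
Summing over $k_0$: the terms with $k_0^2 \le y$ contribute $\ll \sum_{k_0 \le \sqrt y} \frac{k_0}{\varphi(k_0) y} \ll \frac{1}{y}\sum_{k_0 \le \sqrt y}\frac{k_0}{\varphi(k_0)} \ll \frac{\sqrt y \log\log y}{y} \ll y^{-1/4}$ (using $\sum_{k \le t} k/\varphi(k) \ll t\log\log t$ or just $\ll t^{1+\varepsilon}$), while the terms with $k_0^2 > y$ contribute $\ll \sum_{k_0 > \sqrt y} \frac{1}{\varphi(k_0) k_0} \ll \sum_{k_0 > \sqrt y} \frac{1}{k_0^{7/4}} \ll y^{-3/8} \ll y^{-1/4}$, again using $\varphi(k_0) \gg k_0^{3/4}$.

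\textbf{Main obstacle.} The crux is Step 2: controlling primes with small rank of appearance. The key input is that for fixed $k$ the set of such primes is $\{q : q \mid F_k\}$, which is finite with $\omega(F_k) \ll k$ (since $F_k \le \phi^k$), and moreover every such prime exceeds $k^2$ once $q > y \ge k^2$ — so both the smallness of $\varphi(k)$ relative to $k$ is tamed by the denominator $\max(y, k^2)$ and the total count is controlled. One has to be a little careful that the two regimes ($k^2 \le y$ versus $k^2 > y$) are patched together correctly and that the stray prime $q = 5$ and small $y$ are handled, but these are routine. The exponent $1/4$ in the statement is not tight — the argument above gives $y^{-3/8}$ — so there is comfortable slack, which means crude estimates like $\varphi(d) \gg d^{3/4}$ and $\omega(F_k) \ll k$ are more than enough and no delicate sieving is required.
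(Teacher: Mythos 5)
Your proof is correct, and it rests on the same basic dichotomy as the paper's — split the primes $q$ according to whether the rank $z(q)$ is large or small, and exploit that primes with small rank are scarce because they divide small Fibonacci numbers — but the implementation is genuinely different. The paper works with the counting function of $\mathcal{Q}_\gamma = \{p : z(p) < p^\gamma\}$, proving $\#\mathcal{Q}_\gamma(x) \ll x^{2\gamma}$ from $\prod_{p \in \mathcal{Q}_\gamma(x)} p \mid \prod_{n \le x^\gamma} F_n$, then bounds $1/\varphi(\ell(q))$ globally by $q^{\varepsilon}/\ell(q)$ via $\varphi(n) \gg n/\log\log n$, and finishes the small-rank range by partial summation, optimizing $\gamma = 1/3$, $\varepsilon = 1/12$ to land on the exponent $1/4$. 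You instead use the factorization $\varphi(\ell(q)) = (q-1)\varphi(z(q))$ (valid for $q \neq 5$ by coprimality, exactly as in Lemma~\ref{lem:characterizationA}\ref{item5b}), take the threshold $z(q) \lessgtr q^{1/2}$, and in the small-rank range group primes by the exact value $k_0 = z(q)$, using $\omega(F_{k_0}) \ll \log F_{k_0} \ll k_0$ together with $q \ge \max(y, k_0^2)$; this avoids partial summation and the $\varepsilon$-bookkeeping, and in fact yields the stronger exponent $3/8$ (the paper's method, pushed, gives anything below $1/3$), which is more than enough since only $1/4$ is claimed. The auxiliary remarks you leave dangling (the dyadic-range count, the bound $\prod_{j \le k} F_j \le \phi^{k^2}$ — the latter being essentially the paper's device) are unused but harmless, and your treatment of the stray term $q = 5$ and of bounded $y$ is indeed routine: the whole series converges, so for $y$ below any fixed threshold the bound $\ll y^{-1/4}$ is trivial.
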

\begin{proof}

For $\gamma > 0$, put $\mathcal{Q}_\gamma := \{p : z(p) < p^\gamma\}$. 
Clearly,
\begin{equation*}
2^{\# \mathcal{Q}_\gamma(x)} \leq \prod_{p \in \mathcal{Q}_\gamma(x)} p \mid \prod_{n \leq x^\gamma} F_n \leq 2^{\sum_{n \leq x^\gamma} n} \leq 2^{O(x^{2\gamma})} ,
\end{equation*}
from which it follows that $\mathcal{Q}_\gamma(x) \ll x^{2\gamma}$. 

Fix also $\varepsilon \in {]0,1-2\gamma[}$.
For the rest of this proof, all the implied constants may depend on $\gamma$ and $\varepsilon$. 
Since $\varphi(n) \gg n / \log \log n$ for all positive integers $n$ \cite[Ch.~I.5, Theorem~4]{Ten95}, while, by Lemma~\ref{lem:characterizationA}\ref{item5b}, $\ell(q) \ll q^2$ for all prime numbers $q$, we have
\begin{equation}\label{eq:eq1}
\sum_{q > y} \frac1{\varphi(\ell(q))} \ll \sum_{q > y} \frac{\log \log \ell(q)}{\ell(q)}\ll \sum_{q > y} \frac{\log \log q}{\ell(q)} \ll \sum_{q > y} \frac{q^\varepsilon}{\ell(q)} ,
\end{equation}
for all $y > 0$.

On the one hand, again by Lemma~\ref{lem:characterizationA}\ref{item5b}, 
\begin{equation}\label{eq:eq2}
\sum_{\substack{q > y \\ q \notin \mathcal{Q}_\gamma}} \frac{q^{\varepsilon}}{\ell(q)} \ll \sum_{\substack{q > y \\ q \notin \mathcal{Q}_\gamma}} \frac{1}{q^{1-\varepsilon}z(q)} \le \sum_{q>y} \frac{1}{q^{1+\gamma-\varepsilon}} \ll \int_y^{+\infty} \frac{\mathrm{d}t}{t^{1 + \gamma - \varepsilon}}\ll \frac{1}{y^{\gamma-\varepsilon}} .
\end{equation}
On the other hand, by partial summation,
\begin{align}\label{eq:eq3}
\sum_{\substack{q > y \\ q \in \mathcal{Q}_\gamma}} \frac{q^{\varepsilon}}{\ell(q)} &\le \sum_{\substack{q > y \\ q \in \mathcal{Q}_\gamma}}\frac{1}{q^{1-\varepsilon}} = \left.\frac{\#\mathcal{Q}_\gamma(t)}{t^{1-\varepsilon}} \right|_{t=y}^{+\infty} + (1-\varepsilon)\int_y^{+\infty} \frac{\#\mathcal{Q}_\gamma(t)}{t^{2-\varepsilon}} \,\mathrm{d}t \nonumber \\
&\leq \int_y^{+\infty} \frac{\#\mathcal{Q}_\gamma(t)}{t^{2-\varepsilon}} \,\mathrm{d} t \ll \int_y^{+\infty} \frac{\mathrm{d} t}{t^{2-2\gamma-\varepsilon}} \ll \frac{1}{y^{1-2\gamma-\varepsilon}}.
\end{align}
The claim follows by putting together \eqref{eq:eq1}, \eqref{eq:eq2}, and \eqref{eq:eq3}, and by choosing $\gamma=1/3$ and $\varepsilon=1/12$.
\end{proof}

Lastly, for all relatively prime integers $a$ and $m$, define
\begin{equation*}
\pi(x, m, a) := \#\{p \leq x : p \equiv a \bmod m\} .
\end{equation*}
We need the following version of the Brun--Titchmarsh theorem~\cite[Theorem~2]{MR0374060}.

\begin{thm}\label{thm:bruntitchmarsh}
If $a$ and $m$ are relatively prime integers and $m > 0$, then
\begin{equation*}
\pi(x, m, a) < \frac{2 x}{\varphi(m) \log (x / m)} ,
\end{equation*}
for all $x > m$.
\end{thm}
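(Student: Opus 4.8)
The plan is to prove this via the large sieve, in the manner of Montgomery and Vaughan. The heart of the matter is the range in which $x/m$ is large; one first separates off the easy complementary range $m \geq cx$ for a suitable absolute constant $c$, where $\pi(x, m, a) \leq 1 + (x-a)/m$ keeps the count bounded and the claimed inequality can be checked by hand.

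So suppose $x/m \to \infty$ and fix a parameter $Q \leq \sqrt{x/m}$ to be chosen later. Every prime $r$ counted by $\pi(x, m, a)$ with $r > Q$ is, for each prime $p \leq Q$ with $p \nmid m$, not divisible by $p$; writing $r = a + mk$, the integers $k$ so obtained lie in an interval of length at most $x/m$ and, since $\gcd(p, m) = 1$, they avoid exactly one residue class modulo each prime $p \leq Q$ with $p \nmid m$. The at most $\pi(Q)$ primes $r \leq Q$ are counted separately and trivially. Applying the arithmetic form of the large sieve inequality to the integers $k$ then gives
\begin{equation*}
\pi(x, m, a) \;\leq\; \pi(Q) \,+\, \frac{x/m + Q^2}{L(Q)}, \qquad L(Q) := \sum_{\substack{d \leq Q \\ \gcd(d, m) = 1}} \frac{\mu^2(d)}{\varphi(d)} ,
\end{equation*}
and the elementary lower bound $L(Q) \geq (\varphi(m)/m) \log Q$ yields $\pi(x, m, a) \leq \pi(Q) + \tfrac{m}{\varphi(m)} \cdot (x/m + Q^2)/\log Q$. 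Choosing $Q$ of order $\sqrt{(x/m)/\log(x/m)}$ makes the contribution of $Q^2$ negligible next to $x/m$ and gives $\log Q = \tfrac12 \log(x/m) + O(\log\log(x/m))$, whence $\pi(x, m, a) \leq (2 + o(1))\, x / (\varphi(m) \log(x/m))$ as $x/m \to \infty$.

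The main obstacle is to upgrade this asymptotic bound to the clean statement asserted here: the constant exactly $2$, the inequality strict, and valid for every $x > m$ with no secondary term, covering in particular the whole range $m < x$. This requires the sharp form of the large sieve inequality (with the optimal constant $x/m + Q^2$, due to Montgomery, Vaughan, and Selberg), a careful optimization of $Q$, and a delicate treatment of the small-prime and error contributions near the endpoint $m \to x$. All of this is carried out in \cite[Theorem~2]{MR0374060}, which we therefore simply quote.
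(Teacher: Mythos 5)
Your proposal is fine and ends up exactly where the paper does: the paper offers no proof of this statement at all, but simply quotes it as Theorem~2 of Montgomery and Vaughan \cite{MR0374060}, which is also your final move after the (correct) large-sieve sketch giving the weaker $(2+o(1))$ constant. Since the sharp constant, the strict inequality, and the full range $x>m$ are precisely what you—like the authors—delegate to the cited reference, the two treatments are essentially identical.
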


\section{Proof of Theorem~\ref{thm:lbound}}

First, since $1 \in \mathcal{A}$, it is enough to prove the claim only for all sufficiently large $x$.
Let $y > 5$ be a real number to be chosen later.
Define the following sets of primes:
\begin{align*}
\mathcal{P}_1 &:= \{p : q \nmid z(p), \;\forall q \in [3, y]\} , \\
\mathcal{P}_2 &:= \{p : \exists q > y, \; \ell(q) \mid z(p)\} , \\
\mathcal{P} &:= \mathcal{P}_1 \setminus \mathcal{P}_2 .
\end{align*}
We have $\mathcal{P} \subseteq \mathcal{A} \cup \{3\}$.
Indeed, since $3 \mid \ell(2)$ and $q \mid \ell(q)$ for each prime number $q$, it follows easily that if $p \in \mathcal{P}$ then $\ell(q) \nmid z(p)$ for all prime numbers $q$, which, by Lemma~\ref{lem:characterizationA}\ref{item6b}, implies that $p \in \mathcal{A}$ or $p=3$.

Now we give a lower bound for $\#\mathcal{P}_1(x)$.
Let $P_y$ be the product of all prime numbers in~$[3, y]$, and let $\mu$ be the M\"obius function.
By using the inclusion-exclusion principle and Theorem~\ref{thm:cubrerouse}, we get that
\begin{align*}
\lim_{x \to +\infty} \frac{\#\mathcal{P}_1(x)}{x / \log x} &= \lim_{x \to +\infty} \sum_{m \mid P_y} \mu(m) \cdot \frac{\#\{p \leq x : m \mid z(p)\}}{x / \log x} = \sum_{m \mid P_y} \mu(m) \zeta(m) \\ 
 &= \prod_{3 \leq q \leq y} \left(1 - \zeta(q)\right) = \prod_{3 \leq q \leq y} \left(1 - \frac{q}{q^2 - 1}\right) ,
\end{align*}
where we also made use of the fact that the restriction of $\zeta$ to the odd positive integers is multiplicative.
 
As a consequence, for all sufficiently large $x$ depending only on $y$, let say $x \geq x_0(y)$, we have
\begin{equation*}
\#\mathcal{P}_1(x) \geq \frac1{2} \prod_{3 \leq q \leq y} \left(1 - \frac{q}{q^2 - 1}\right) \cdot \frac{x}{\log x} \gg \frac1{\log y} \cdot \frac{x}{\log x} ,
\end{equation*}
where the last inequality follows from Mertens' third theorem~\cite[Ch. I.1, Theorem~11]{Ten95}.

We also need an upper bound for $\#\mathcal{P}_2(x)$.
Since $z(p) \mid p \pm 1$ for all primes $p > 5$, we have
\begin{equation}\label{equ:P2bound1}
\#\mathcal{P}_2(x) \leq \sum_{q > y} \#\{p \leq x : \ell(q) \mid z(p)\} \leq \sum_{q > y} \pi(x, \ell(q), \pm 1) ,
\end{equation}
for all $x > 0$, where, for the sake of brevity, we put
\begin{equation*}
\pi(x, \ell(q), \pm 1) := \pi(x, \ell(q), -1) + \pi(x, \ell(q), 1) .
\end{equation*}
On the one hand, by Theorem~\ref{thm:bruntitchmarsh} and Lemma~\ref{lem:phiellq}, we have
\begin{equation}\label{equ:P2bound2}
\sum_{y < q < x^{1/2}} \pi(x, \ell(q), \pm 1) \ll \sum_{q > y} \frac1{\varphi(\ell(q))} \cdot \frac{x}{\log x} \ll \frac1{y^{1/4}} \cdot \frac{x}{\log x} .
\end{equation}
On the other hand, by the trivial estimate for $\pi(x, \ell(q), \pm 1)$ and Lemma~\ref{lem:phiellq}, we get
\begin{equation}\label{equ:P2bound3}
\sum_{q > x^{1/2}} \pi(x, \ell(q), \pm 1) \ll \sum_{q > x^{1/2}} \frac{x}{\ell(q)} \leq \sum_{q > x^{1/2}} \frac{x}{\varphi(\ell(q))} \ll x^{7/8} .
\end{equation}
Therefore, putting together (\ref{equ:P2bound1}), (\ref{equ:P2bound2}), and (\ref{equ:P2bound3}), we find that
\begin{equation*}
\#\mathcal{P}_2(x) \ll \frac1{y^{1/4}} \cdot \frac{x}{\log x} + x^{7/8} .
\end{equation*}
In conclusion, there exist two absolute constants $c_1, c_2 > 0$ such that
\begin{equation}\label{equ:Albound}
\#\mathcal{A}(x) \gg \#\mathcal{P}(x) \geq \#\mathcal{P}_1(x) - \#\mathcal{P}_2(x) \geq \left(\frac{c_1}{\log y} - \frac{c_2}{y^{1/4}} - \frac{c_2 \log x}{x^{1/8}} \right) \cdot \frac{x}{\log x} ,
\end{equation}
for all $x \geq x_0(y)$.

Finally, we can choose $y$ to be sufficiently large so that
\begin{equation*}
\frac{c_1}{\log y} - \frac{c_2}{y^{1/4}} > 0 .
\end{equation*}
Hence, from (\ref{equ:Albound}) it follows that $\#\mathcal{A}(x) \gg x / \log x$, for all sufficiently large $x$.

\section{Proof of Theorem~\ref{thm:ubound}}

Fix $\varepsilon > 0$ and pick a prime number $q$ such that $1/q < \varepsilon$.
Let $\mathcal{P}$ be the set of prime numbers $p$ such that $\ell(q) \mid z(p)$.
By Theorem~\ref{thm:cubrerouse}, we know that $\mathcal{P}$ has a positive relative density in the set of primes.
As a consequence, we can pick a sufficiently large $y > 0$ so that
\begin{equation*}
\prod_{p \in \mathcal{P}(y)} \left(1 - \frac1{p}\right) < \varepsilon .
\end{equation*}
Let $\mathcal{B}$ be the set of positive integers without prime factors in $\mathcal{P}(y)$.
We split $\mathcal{A}$ into two subsets: $\mathcal{A}_1 := \mathcal{A} \cap \mathcal{B}$ and $\mathcal{A}_2 := \mathcal{A} \setminus \mathcal{A}_1$.
If $n \in \mathcal{A}_2$ then $n$ has a prime factor $p$ such that $\ell(q) \mid z(p)$.
Hence, $\ell(q) \mid \ell(n)$ and, by Lemma~\ref{lem:characterizationA}\ref{item4b}, we get that $q \mid n$, so all the elements of $\mathcal{A}_2$ are multiples of $q$.
In conclusion,
\begin{equation*}
\limsup_{x \to +\infty} \frac{\#\mathcal{A}(x)}{x} \leq \limsup_{x \to +\infty} \frac{\#\mathcal{A}_1(x)}{x} + \limsup_{x \to +\infty} \frac{\#\mathcal{A}_2(x)}{x} \leq \prod_{p \in \mathcal{P}(y)} \left(1 - \frac1{p}\right) + \frac1{q} < 2\varepsilon ,
\end{equation*}
and, by the arbitrariness of $\varepsilon$, it follows that $\mathcal{A}$ has zero asymptotic density.

\providecommand{\bysame}{\leavevmode\hbox to3em{\hrulefill}\thinspace}
\providecommand{\MR}{\relax\ifhmode\unskip\space\fi MR }
\providecommand{\MRhref}[2]{%
  \href{http://www.ams.org/mathscinet-getitem?mr=#1}{#2}
}
\providecommand{\href}[2]{#2}

\end{document}